%
\documentclass[12pt]{article}
\usepackage[amsmath]{e-jc}


\usepackage{graphicx}
\usepackage{xurl}
\usepackage{tikz}
\usetikzlibrary{tikzmark,decorations.pathreplacing}
\usetikzlibrary{arrows,matrix}
\usetikzlibrary{positioning}
\usetikzlibrary{decorations}



\dateline{Oct 29, 2021}{Nov 17, 2022}{TBD}

\MSC{05A15}

%
%
\Copyright{The author. Released under the CC BY license (International 4.0).}

\title{Counting Baxter Matrices}


\author{George Spahn\\
\small Department of Mathematics\\[-0.8ex]
\small Rutgers University (New Brunswick)\\[-0.8ex] 
\small New Jersey, U.S.A.\\
\small\tt www.georgespahn.com}

\begin{document}

\maketitle


\begin{abstract}
  Donald Knuth recently introduced the notion of a Baxter matrix, generalizing Baxter permutations. We show that for fixed number of rows, $r$, the number of Baxter matrices with $r$ rows and $k$ columns eventually satisfies a polynomial in $k$ of degree $2r-2$. We also give a proof of Knuth's conjecture that the number of 1s in an $r \times k$ Baxter matrix is less than $r+k$.
\end{abstract}

\section{Introduction}

Baxter permutations are a class of permutations that come up in numerous places. They were introduced in 1964 by Glen Baxter, who was studying fixed points of commuting continuous functions \cite{Baxter}. They have since been connected to Hopf Algebras \cite{GS}, planar graphs \cite{Bonichon}, and tilings \cite{Korn}. The number of Baxter permutations of length $n$ is given by A001181 in the OEIS and forms a holonomic sequence of order 2 and degree 2 \cite{Oeis}. More recently, Donald Knuth suggested a generalization of Baxter permutations to matrices \cite{Knuth}. 

Donald Knuth defines a Baxter matrix as a matrix of 0s and 1s that satisfy 4 conditions. Before we state the conditions, we must define a pinwheel in a matrix.

\begin{definition} 
  Let $M$ be an $r \times k$ matrix. For $1 \leq x \leq r-1$ and $1 \leq y \leq k-1$, the \textbf{clockwise pinwheel} of index $(x,y)$, denoted $P_{x,y}$, is a specific subset of the entries of $M$.  $P_{x,y}$ is divided into 4 \textbf{segments}. 

\begin{itemize}
    \item Segment $A_{x,y}$ contains the entries $M[i,y+1]$, for $1 \leq i \leq x$.
    \item Segment $B_{x,y}$ contains the entries $M[x,i]$, for $1 \leq i \leq y$.
    \item Segment $C_{x,y}$ contains the entries $M[x+1,i]$, for $y+1 \leq i \leq k$.
    \item Segment $D_{x,y}$ contains the entries $M[i,y]$, for $x+1 \leq i \leq r$.
\end{itemize}

Similarly, for $1 \leq x \leq r-1$ and $1 \leq y \leq k-1$, the \textbf{counterclockwise pinwheel} of index $(x,y)$, denoted $P'_{x,y}$ also contains four segments.
\begin{itemize}
    \item Segment $A'_{x,y}$ contains the entries $M[i,y]$, for $1 \leq i \leq x$.
    \item Segment $B'_{x,y}$ contains the entries $M[x+1,i]$, for $1 \leq i \leq y$.
    \item Segment $C'_{x,y}$ contains the entries $M[x,i]$, for $y+1 \leq i \leq k$.
    \item Segment $D'_{x,y}$ contains the entries $M[i,y+1]$, for $x+1 \leq i \leq r$.
\end{itemize}
\end{definition}

\begin{definition}
    A \textbf{Baxter Matrix} is a matrix of 0s and 1s that satisfy 4 conditions.

\begin{enumerate}
    \item Each row contains at least one 1.
    \item Each column contains at least one 1.
    \item For each clockwise pinwheel, at least one of the four segments has only 0s.
    \item For each counterclockwise pinwheel, at least one of the four segments has only 0s.
\end{enumerate}
\end{definition}

A pinwheel is said to be satisfied if at least one of its four segments has only 0s. In an $r \times k$ matrix, there are $(r-1)(k-1)$ pinwheels of each direction that must be satisfied. As an example consider the two matrices in Figure \ref{fig:examples}. The left one is indeed a Baxter matrix, and the reader is encouraged to check that all the pinwheels are satisfied. The right one is almost a Baxter matrix; every pinwheel except $P'_{2,2}$ is satisfied. Note that in general a Baxter matrix could have multiple 1s in a row or column.

\begin{figure}[h]
    \centering
   \begin{tikzpicture}
        \matrix [matrix of math nodes,left delimiter=(,right delimiter=)] 
        {
            0 &1 &0 &0 \\               
            1 &0 &0 &0 \\               
            0 &0 &0 &1 \\  
            0 &0 &1 &0 \\           
        };  
        \begin{scope}[xshift=3cm]
\matrix [matrix of math nodes,left delimiter=(,right delimiter=)] 
        {
            0 &1 &0 &0 \\               
            0 &0 &0 &1 \\               
            1 &0 &0 &0 \\  
            0 &0 &1 &0 \\           
        };  
\end{scope}
\begin{scope}[xshift=6cm]
        \matrix [matrix of math nodes,left delimiter=(,right delimiter=)] (m)
        {
            0 &1 &0 &0 \\               
            0 &0 &0 &1 \\               
            1 &0 &0 &0 \\  
            0 &0 &1 &0 \\           
        };  
        \draw[color=black] (m-3-1.north west) -- (m-3-2.north east) -- (m-3-2.south east) -- (m-3-1.south west);
        \draw[color=black] (m-1-2.north west) -- (m-2-2.south west) -- (m-2-2.south east) -- (m-1-2.north east);
        \draw[color=black] (m-2-4.north east) -- (m-2-3.north west) -- (m-2-3.south west) -- (m-2-4.south east);
        \draw[color=black] (m-4-3.south west) -- (m-3-3.north west) -- (m-3-3.north east) -- (m-4-3.south east);  
\end{scope}   
    \end{tikzpicture}
    \caption{A Baxter matrix (left), and a non-Baxter matrix (middle), with its unsatisfied pinwheel (right).}
     \label{fig:examples}
\end{figure}

It turns out that $r \times r$ Baxter matrices that only have a single 1 in each row and column are in bijection with Baxter permutations of length $r$. This can be seen by viewing the matrix as a permutation matrix. For each column $i$, the corresponding permutation $\pi$ maps $i$ to $r$ minus the row in which the 1 appears in column $i$.

What is the maximum number of 1s that can appear in a Baxter matrix of size $r \times k$? Knuth conjectured that this maximum is equal to $r+k-1$, and exhaustively checked it for all $r$ and $k$ $\leq 7$. In this paper we will prove that the conjecture is true, i.e. that for any $r,k \geq 1$, the number of 1s in an $r \times k$ Baxter matrix is less than $r+k$.
\section{A  Finite State Automaton for Baxter Matrices with r rows}
In this section we will describe a finite state automaton for determining whether a matrix is Baxter. We will fix the number of rows, and have the automaton read the columns of the matrix as symbols. When it is done reading the columns, it should accept or reject according to whether the matrix is Baxter. For ease of explanation, consider first the $r=2$ case.

\subsection{The 2 Row Case}

Let's construct a finite state machine for determining whether a 0-1 matrix with 2 rows is a Baxter Matrix. The symbols that our machine should recognize as input should be the possible columns in the matrix. There are 4 possible columns in a 0-1 matrix with 2 rows, [0,0]$^T$, [0,1]$^T$, [1,0]$^T$, and [1,1]$^T$. We can ignore the column [0,0]$^T$ because any column in a baxter matrix must not be all zeros. 

As we move through the columns of our input matrix, our machine will keep track of the following information for each row: whether the row is all 0s up to this point (so that it can be used to satisfy future pinwheels) and whether the row must be all 0s in the future (because a pinwheel from earlier is depending on it). Thus each row can be in one of 4 possible states; we will refer to them as the 4 rowstates: 
\begin{enumerate}
    \item This row has a 1 in the most recent column.
    \item This row only contains 0s up to now.
    \item This row must only contain 0s for the rest of the columns, including the most recent one.
    \item This row had a 0 in the most recent column but does not fit 2. or 3.
\end{enumerate}
Our machine will have 16 states, one for each ordered pair of rowstates. We next remove the states which have all rows in rowstates 2, 3, and 4. This is because if all rows had a 0 most recently, then the corresponding column contains only 0s, and the matrix cannot be Baxter. This leaves us with $16 - 9 = 7$ states.  I claim that the information contained in such a state is enough to determine which columns can come next. Suppose we just read column $m$, and have a proposed column $m+1$. For both pinwheels of index $(1,m)$ we can mostly check whether they are satisfied with the information stored. The information about whether the vertical strips are all 0s is known because we store the exact contents of the most recent column in the state. The horizontal strip going left is available if and only if the corresponding row is in rowstate 2. The only thing we don't know yet is whether a horizontal strip going off to the right is all 0s, but if a pinwheel requires it to be so, we can set the rowstate of the corresponding row to 3, and keep track of it for later. We note some other basic constraints:
\begin{itemize}
    \item Once a row is in rowstate 3 it can never leave rowstate 3.
    \item Once a row leaves rowstate 2 it can never come back to rowstate 2.
    \item A row cannot transition from 2 to 4.
    \item A row cannot transition from 2 to 3 directly, or else it will be all zeros.
\end{itemize}
As the automaton proceeds reading columns, it checks whether each new pinwheel can be satisfied. When it encounters such a pinwheel that cannot be satisfied, it can immediately reject the sequence of columns. If a pinwheel cannot be satisfied given the first $j$ columns, there is no Baxter matrix that begins with those first $j$ columns. Similarly, if a pinwheel is satisfied given the first $j$ columns, we do not need to keep track of that pinwheel any longer. It will still be satisfied in any Baxter matrix with those first $j$ columns provided we keep track of which rows must be all 0s in the future.

We additionally add a start state that transitions to all states that have each row in either rowstate 1 or rowstate 2 (rowstates 3 and 4 cannot be reached using only a single column). We designate all states that have no rows in rowstate 2 as accept states (2 must be excluded so that no row of the final matrix is all 0s). Enforcing all of the rules we have described so far yields the automaton in Figure \ref{fig:a2}. The state label 12 indicates that the first row is in rowstate 1 and the second row is in rowstate 2. The transition label 10 indicates the column [1,0]$^T$. \\

\begin{figure}[h]
    \centering
    \begin{tikzpicture}[scale=0.2]
\tikzstyle{every node}+=[inner sep=0pt]
\draw [black] (26.3,-3.2) circle (3);
\draw (26.3,-3.2) node {$S$};
\draw [black] (14.3,-11.7) circle (3);
\draw (14.3,-11.7) node {$12$};
\draw [black] (38.8,-11.7) circle (3);
\draw (38.8,-11.7) node {$21$};
\draw [black] (26.3,-23) circle (3);
\draw (26.3,-23) node {$11$};
\draw [black] (26.3,-23) circle (2.4);
\draw [black] (44.7,-23) circle (3);
\draw (44.7,-23) node {$14$};
\draw [black] (44.7,-23) circle (2.4);
\draw [black] (8.1,-23) circle (3);
\draw (8.1,-23) node {$41$};
\draw [black] (8.1,-23) circle (2.4);
\draw [black] (14.3,-34.8) circle (3);
\draw (14.3,-34.8) node {$13$};
\draw [black] (14.3,-34.8) circle (2.4);
\draw [black] (38.8,-34.8) circle (3);
\draw (38.8,-34.8) node {$31$};
\draw [black] (38.8,-34.8) circle (2.4);
\draw [black] (23.85,-4.93) -- (16.75,-9.97);
\fill [black] (16.75,-9.97) -- (17.69,-9.91) -- (17.11,-9.1);
\draw (18.8,-6.95) node [above] {$10$};
\draw [black] (26.3,-6.2) -- (26.3,-20);
\fill [black] (26.3,-20) -- (26.8,-19.2) -- (25.8,-19.2);
\draw (25.8,-13.1) node [left] {$11$};
\draw [black] (28.78,-4.89) -- (36.32,-10.01);
\fill [black] (36.32,-10.01) -- (35.94,-9.15) -- (35.38,-9.98);
\draw (31.05,-7.95) node [below] {$01$};
\draw [black] (16.48,-13.76) -- (24.12,-20.94);
\fill [black] (24.12,-20.94) -- (23.88,-20.03) -- (23.19,-20.76);
\draw (18.82,-17.83) node [below] {$11$};
\draw [black] (36.57,-13.71) -- (28.53,-20.99);
\fill [black] (28.53,-20.99) -- (29.45,-20.82) -- (28.78,-20.08);
\draw (31.07,-16.86) node [above] {$11$};
\draw [black] (24.16,-25.1) -- (16.44,-32.7);
\fill [black] (16.44,-32.7) -- (17.36,-32.49) -- (16.66,-31.78);
\draw (18.78,-28.42) node [above] {$10$};
\draw [black] (28.48,-25.06) -- (36.62,-32.74);
\fill [black] (36.62,-32.74) -- (36.38,-31.83) -- (35.69,-32.56);
\draw (31.03,-29.38) node [below] {$01$};
\draw [black] (12.083,-9.697) arc (255.63516:-32.36484:2.25);
\draw (10.38,-4.9) node [above] {$10$};
\fill [black] (14.54,-8.72) -- (15.22,-8.07) -- (14.26,-7.82);
\draw [black] (12.86,-14.33) -- (9.54,-20.37);
\fill [black] (9.54,-20.37) -- (10.37,-19.91) -- (9.49,-19.43);
\draw (10.53,-16.16) node [left] {$01$};
\draw [black] (36.978,-9.332) arc (245.30993:-42.69007:2.25);
\draw (36.63,-4.45) node [above] {$01$};
\fill [black] (39.57,-8.81) -- (40.36,-8.29) -- (39.45,-7.88);
\draw [black] (40.19,-14.36) -- (43.31,-20.34);
\fill [black] (43.31,-20.34) -- (43.38,-19.4) -- (42.5,-19.86);
\draw (41.07,-18.5) node [left] {$10$};
\draw [black] (45.117,-20.041) arc (199.71312:-88.28688:2.25);
\draw (50,-16.92) node [above] {$10$};
\fill [black] (47.3,-21.53) -- (48.22,-21.73) -- (47.89,-20.79);
\draw [black] (6.797,-25.689) arc (1.87498:-286.12502:2.25);
\draw (2.2,-28.56) node [left] {$01$};
\fill [black] (5.17,-23.6) -- (4.36,-23.13) -- (4.39,-24.13);
\draw [black] (9.5,-25.66) -- (12.9,-32.14);
\fill [black] (12.9,-32.14) -- (12.98,-31.2) -- (12.09,-31.67);
\draw (10.52,-30.05) node [left] {$10$};
\draw [black] (11.62,-36.123) arc (-36:-324:2.25);
\draw (7.05,-34.8) node [left] {$10$};
\fill [black] (11.62,-33.48) -- (11.27,-32.6) -- (10.68,-33.41);
\draw [black] (41.109,-32.903) arc (157.12921:-130.87079:2.25);
\draw (46.13,-32.86) node [right] {$01$};
\fill [black] (41.71,-35.48) -- (42.25,-36.25) -- (42.64,-35.33);
\draw [black] (43.36,-25.68) -- (40.14,-32.12);
\fill [black] (40.14,-32.12) -- (40.95,-31.62) -- (40.05,-31.18);
\draw (41.05,-27.79) node [left] {$01$};
\end{tikzpicture}\\
    \caption{The automaton for the 2 row case.}
    \label{fig:a2}
\end{figure}

We can do this same process for any fixed number of rows. There will be $2^r$ symbols, and $4^r - 3^r$ states. Let's call this automaton $A_r$. Figure \ref{fig:a3} contains $A_3$:

\begin{figure}[h]
    \centering
    \begin{tikzpicture}[scale=0.2]
\tikzstyle{every node}+=[inner sep=0pt]

\draw [black] (37.67,-34.1) -- (36.43,-53.7);
\fill [black] (36.43,-53.7) -- (36.98,-52.94) -- (35.98,-52.87);
\draw [black] (36.81,-33.84) -- (25.39,-53.96);
\fill [black] (25.39,-53.96) -- (26.22,-53.51) -- (25.35,-53.02);
\draw [black] (38.43,-34) -- (45.07,-53.8);
\fill [black] (45.07,-53.8) -- (45.29,-52.89) -- (44.34,-53.2);
\draw [black] (18.08,-22.46) -- (12.72,-30.44);
\fill [black] (12.72,-30.44) -- (13.58,-30.06) -- (12.75,-29.5);
\draw [black] (17.58,-21.97) -- (5.22,-30.93);
\fill [black] (5.22,-30.93) -- (6.16,-30.86) -- (5.57,-30.05);
\draw [black] (18.18,-22.52) --  (6.12,-42.78);
\fill [black] (6.12,-42.78) -- (6.96,-42.35) -- (6.1,-41.84);
\draw [black] (18.75,-22.75) -- (14.15,-42.55);
\fill [black] (14.15,-42.55) -- (14.82,-41.89) -- (13.85,-41.66);
\draw [black] (56.81,-45.4) -- (38.09,-54.8);
\fill [black] (38.09,-54.8) -- (39.03,-54.89) -- (38.58,-54);
\draw [black] (57.09,-45.81) -- (47.21,-54.39);
\fill [black] (47.21,-54.39) -- (48.14,-54.24) -- (47.49,-53.49);
\draw [black] (49.38,-33.92) -- (45.42,-42.68);
\fill [black] (45.42,-42.68) -- (46.21,-42.15) -- (45.3,-41.74);
\draw [black] (50.46,-34.08) -- (51.54,-42.52);
\fill [black] (51.54,-42.52) -- (51.94,-41.66) -- (50.95,-41.79);
\draw [black] (40.28,-22.3) -- (33.02,-30.6);
\fill [black] (33.02,-30.6) -- (33.92,-30.32) -- (33.17,-29.66);
\draw [black] (20.175,-10.803) arc (267.69007:-20.30993:1.5);
\fill [black] (21.64,-9.82) -- (22.17,-9.05) -- (21.18,-9);
\draw [black] (22.83,-13.57) -- (25.67,-19.03);
\fill [black] (25.67,-19.03) -- (25.75,-18.09) -- (24.86,-18.55);
\draw [black] (23.51,-12.98) -- (32.59,-19.62);
\fill [black] (32.59,-19.62) -- (32.24,-18.74) -- (31.65,-19.55);
\draw [black] (21.33,-13.72) -- (19.77,-18.88);
\fill [black] (19.77,-18.88) -- (20.48,-18.26) -- (19.53,-17.97);
\draw [black] (23.72,-12.63) -- (39.78,-19.97);
\fill [black] (39.78,-19.97) -- (39.26,-19.18) -- (38.85,-20.09);
\draw [black] (22.25,-13.77) -- (25.15,-30.13);
\fill [black] (25.15,-30.13) -- (25.5,-29.26) -- (24.52,-29.43);
\draw [black] (23.13,-13.37) -- (36.57,-30.53);
\fill [black] (36.57,-30.53) -- (36.47,-29.59) -- (35.68,-30.2);
\draw [black] (46.943,-21.172) arc (308.49659:20.49659:1.5);
\fill [black] (47.41,-19.47) -- (47.31,-18.54) -- (46.52,-19.16);
\draw [black] (47.23,-21.9) -- (33.37,-31);
\fill [black] (33.37,-31) -- (34.31,-30.98) -- (33.77,-30.14);
\draw [black] (49.13,-22.79) -- (49.97,-30.11);
\fill [black] (49.97,-30.11) -- (50.38,-29.26) -- (49.38,-29.38);
\draw [black] (38.23,-46.08) -- (44.47,-54.12);
\fill [black] (44.47,-54.12) -- (44.38,-53.18) -- (43.59,-53.8);
\draw [black] (35.51,-45.83) -- (25.89,-54.37);
\fill [black] (25.89,-54.37) -- (26.82,-54.21) -- (26.16,-53.47);
\draw [black] (11.022,-30.194) arc (224.61085:-63.38915:1.5);
\fill [black] (12.76,-30.48) -- (13.68,-30.28) -- (12.98,-29.56);
\draw [black] (12.55,-33.86) -- (23.45,-53.94);
\fill [black] (23.45,-53.94) -- (23.5,-53) -- (22.63,-53.48);
\draw [black] (44.156,-54.442) arc (258.56081:-29.43919:1.5);
\fill [black] (45.76,-53.71) -- (46.41,-53.02) -- (45.43,-52.83);
\draw [black] (42.655,-44.069) arc (285.24412:-2.75588:1.5);
\fill [black] (43.76,-42.69) -- (44.03,-41.79) -- (43.07,-42.05);
\draw [black] (42.85,-45.47) -- (26.15,-54.73);
\fill [black] (26.15,-54.73) -- (27.09,-54.78) -- (26.61,-53.9);
\draw [black] (44.69,-34.1) -- (45.61,-53.7);
\fill [black] (45.61,-53.7) -- (46.07,-52.88) -- (45.07,-52.93);
\draw [black] (43.94,-33.99) -- (36.96,-53.81);
\fill [black] (36.96,-53.81) -- (37.7,-53.22) -- (36.76,-52.89);
\draw [black] (43.3,-33.62) -- (25.7,-54.18);
\fill [black] (25.7,-54.18) -- (26.6,-53.9) -- (25.84,-53.25);
\draw [black] (10.31,-18.808) arc (207.43495:-80.56505:1.5);
\fill [black] (11.89,-19.6) -- (12.83,-19.67) -- (12.37,-18.78);
\draw [black] (9.28,-22.52) -- (4.62,-30.38);
\fill [black] (4.62,-30.38) -- (5.46,-29.95) -- (4.6,-29.44);
\draw [black] (9.87,-22.75) -- (5.53,-42.55);
\fill [black] (5.53,-42.55) -- (6.19,-41.87) -- (5.21,-41.66);
\draw [black] (10.58,-22.78) -- (13.42,-42.52);
\fill [black] (13.42,-42.52) -- (13.8,-41.66) -- (12.81,-41.8);
\draw [black] (64.113,-43.86) arc (279:-9:1.5);
\fill [black] (65.36,-42.61) -- (65.73,-41.74) -- (64.74,-41.9);
\draw [black] (64.13,-45.21) -- (38.17,-54.99);
\fill [black] (38.17,-54.99) -- (39.1,-55.18) -- (38.74,-54.24);
\draw [black] (54.878,-30.705) arc (253.27747:-34.72253:1.5);
\fill [black] (56.54,-30.12) -- (57.25,-29.5) -- (56.29,-29.21);
\draw [black] (55.62,-33.98) -- (52.48,-42.62);
\fill [black] (52.48,-42.62) -- (53.23,-42.04) -- (52.29,-41.7);
\draw [black] (54.69,-33.29) -- (26.01,-54.51);
\fill [black] (26.01,-54.51) -- (26.95,-54.44) -- (26.35,-53.63);
\draw [black] (62.61,-22.78) -- (65.69,-42.52);
\fill [black] (65.69,-42.52) -- (66.06,-41.66) -- (65.07,-41.81);
\draw [black] (62.14,-22.79) -- (61.56,-30.11);
\fill [black] (61.56,-30.11) -- (62.12,-29.35) -- (61.12,-29.27);
\draw [black] (61.99,-22.78) -- (58.91,-42.52);
\fill [black] (58.91,-42.52) -- (59.53,-41.81) -- (58.54,-41.66);
\draw [black] (32.33,-12.5) -- (12.17,-20.1);
\fill [black] (12.17,-20.1) -- (13.1,-20.28) -- (12.74,-19.35);
\draw [black] (33.318,-10.014) arc (234:-54:1.5);
\fill [black] (35.08,-10.01) -- (35.96,-9.66) -- (35.15,-9.07);
\draw [black] (33.318,-10.014) arc (234:-54:1.5);
\fill [black] (35.08,-10.01) -- (35.96,-9.66) -- (35.15,-9.07);
\draw [black] (36.14,-12.29) -- (67.86,-20.31);
\fill [black] (67.86,-20.31) -- (67.21,-19.63) -- (66.96,-20.6);
\draw [black] (32.49,-12.83) -- (20.91,-19.77);
\fill [black] (20.91,-19.77) -- (21.86,-19.79) -- (21.34,-18.93);
\draw [black] (35.11,-13.58) -- (43.69,-30.32);
\fill [black] (43.69,-30.32) -- (43.77,-29.38) -- (42.88,-29.84);
\draw [black] (36.1,-12.41) -- (60.4,-20.19);
\fill [black] (60.4,-20.19) -- (59.79,-19.47) -- (59.48,-20.42);
\draw [black] (34.55,-13.77) -- (37.45,-30.13);
\fill [black] (37.45,-30.13) -- (37.8,-29.26) -- (36.82,-29.43);
\draw [black] (67.01,-34.09) -- (66.19,-42.51);
\fill [black] (66.19,-42.51) -- (66.77,-41.76) -- (65.77,-41.66);
\draw [black] (65.932,-30.563) arc (247.24591:-40.75409:1.5);
\fill [black] (67.65,-30.16) -- (68.42,-29.61) -- (67.5,-29.23);
\draw [black] (66.06,-33.74) -- (59.74,-42.86);
\fill [black] (59.74,-42.86) -- (60.61,-42.48) -- (59.78,-41.91);
\draw [black] (55.23,-22.79) -- (56.07,-30.11);
\fill [black] (56.07,-30.11) -- (56.48,-29.26) -- (55.48,-29.38);
\draw [black] (53.821,-19.194) arc (244.00798:-43.99202:1.5);
\fill [black] (55.56,-18.89) -- (56.36,-18.39) -- (55.46,-17.95);
\draw [black] (55.99,-22.54) -- (60.41,-30.36);
\fill [black] (60.41,-30.36) -- (60.46,-29.42) -- (59.59,-29.91);
\draw [black] (54.22,-22.64) -- (50.98,-30.26);
\fill [black] (50.98,-30.26) -- (51.75,-29.72) -- (50.83,-29.33);
\draw [black] (53.79,-22.39) -- (38.21,-42.91);
\fill [black] (38.21,-42.91) -- (39.09,-42.57) -- (38.3,-41.97);
\draw [black] (50.02,-13.46) -- (53.88,-19.14);
\fill [black] (53.88,-19.14) -- (53.84,-18.2) -- (53.02,-18.76);
\draw [black] (48.9,-13.8) -- (48.9,-18.8);
\fill [black] (48.9,-18.8) -- (49.4,-18) -- (48.4,-18);
\draw [black] (48.018,-10.014) arc (234:-54:1.5);
\fill [black] (49.78,-10.01) -- (50.66,-9.66) -- (49.85,-9.07);
\draw [black] (49.03,-13.8) -- (50.07,-30.1);
\fill [black] (50.07,-30.1) -- (50.52,-29.27) -- (49.52,-29.34);
\draw [black] (47.64,-13.35) -- (42.86,-19.25);
\fill [black] (42.86,-19.25) -- (43.75,-18.94) -- (42.98,-18.31);
\draw [black] (50.56,-12.92) -- (60.64,-19.68);
\fill [black] (60.64,-19.68) -- (60.25,-18.82) -- (59.7,-19.65);
\draw [black] (47.94,-13.55) -- (38.76,-30.35);
\fill [black] (38.76,-30.35) -- (39.58,-29.88) -- (38.7,-29.4);
\draw [black] (60.242,-30.479) arc (243.27703:-44.72297:1.5);
\fill [black] (61.98,-30.19) -- (62.79,-29.71) -- (61.9,-29.26);
\draw [black] (60.29,-33.77) -- (46.81,-54.03);
\fill [black] (46.81,-54.03) -- (47.67,-53.65) -- (46.83,-53.09);
\draw [black] (69.48,-22.77) -- (66.32,-42.53);
\fill [black] (66.32,-42.53) -- (66.94,-41.81) -- (65.95,-41.66);
\draw [black] (69.35,-22.75) -- (67.65,-30.15);
\fill [black] (67.65,-30.15) -- (68.32,-29.48) -- (67.34,-29.26);
\draw [black] (68.286,-19.505) arc (257.19859:-30.80141:1.5);
\fill [black] (69.91,-18.81) -- (70.57,-18.14) -- (69.6,-17.92);
\draw [black] (68.286,-19.505) arc (257.19859:-30.80141:1.5);
\fill [black] (69.91,-18.81) -- (70.57,-18.14) -- (69.6,-17.92);
\draw [black] (68.95,-22.61) -- (59.45,-42.69);
\fill [black] (59.45,-42.69) -- (60.25,-42.18) -- (59.34,-41.75);
\draw [black] (15.49,-45.39) -- (34.51,-54.81);
\fill [black] (34.51,-54.81) -- (34.01,-54.01) -- (33.57,-54.9);
\draw [black] (15.08,-45.95) -- (23.02,-54.25);
\fill [black] (23.02,-54.25) -- (22.83,-53.33) -- (22.1,-54.02);
\draw [black] (1.796,-31.256) arc (272.65981:-15.34019:1.5);
\fill [black] (3.17,-30.15) -- (3.63,-29.33) -- (2.64,-29.38);
\draw [black] (3.84,-34.09) -- (4.86,-42.51);
\fill [black] (4.86,-42.51) -- (5.26,-41.66) -- (4.27,-41.78);
\draw [black] (4.86,-33.65) -- (12.44,-42.95);
\fill [black] (12.44,-42.95) -- (12.32,-42.01) -- (11.54,-42.64);
\draw [black] (34.309,-55.78) arc (300.03751:12.03751:1.5);
\fill [black] (35.03,-54.17) -- (35.06,-53.23) -- (34.19,-53.73);
\draw [black] (49.835,-44.173) arc (288.2757:0.2757:1.5);
\fill [black] (50.86,-42.74) -- (51.09,-41.82) -- (50.14,-42.14);
\draw [black] (49.95,-45.26) -- (26.25,-54.94);
\fill [black] (26.25,-54.94) -- (27.18,-55.1) -- (26.8,-54.18);
\draw [black] (22.544,-54.977) arc (276.44623:-11.55377:1.5);
\fill [black] (23.84,-53.79) -- (24.25,-52.94) -- (23.26,-53.05);
\draw [black] (5.593,-42.57) arc (193.39871:-94.60129:1.5);
\fill [black] (6.93,-43.72) -- (7.83,-44.02) -- (7.59,-43.05);
\draw [black] (6.98,-45.18) -- (34.42,-55.02);
\fill [black] (34.42,-55.02) -- (33.83,-54.28) -- (33.5,-55.22);
\draw [black] (24.94,-34.02) -- (22.46,-42.58);
\fill [black] (22.46,-42.58) -- (23.16,-41.95) -- (22.2,-41.67);
\draw [black] (26.8,-33.62) -- (44.4,-54.18);
\fill [black] (44.4,-54.18) -- (44.26,-53.25) -- (43.5,-53.9);
\draw [black] (26.14,-33.99) -- (29.06,-42.61);
\fill [black] (29.06,-42.61) -- (29.28,-41.69) -- (28.33,-42.01);
\draw [black] (25,-22) -- (13.2,-30.9);
\fill [black] (13.2,-30.9) -- (14.14,-30.81) -- (13.54,-30.02);
\draw [black] (26.086,-18.875) arc (222.69007:-65.30993:1.5);
\fill [black] (27.81,-19.22) -- (28.74,-19.05) -- (28.06,-18.31);
\draw [black] (25.43,-22.43) -- (19.67,-30.47);
\fill [black] (19.67,-30.47) -- (20.54,-30.12) -- (19.72,-29.53);
\draw [black] (27.4,-22.63) -- (36.2,-42.67);
\fill [black] (36.2,-42.67) -- (36.33,-41.74) -- (35.42,-42.14);
\draw [black] (26.41,-22.79) -- (25.69,-30.11);
\fill [black] (25.69,-30.11) -- (26.27,-29.36) -- (25.27,-29.26);
\draw [black] (19.909,-44.558) arc (299.3989:11.3989:1.5);
\fill [black] (20.64,-42.95) -- (20.69,-42.01) -- (19.81,-42.5);
\draw [black] (23.71,-45.35) -- (43.89,-54.85);
\fill [black] (43.89,-54.85) -- (43.38,-54.06) -- (42.95,-54.96);
\draw [black] (31.252,-30.159) arc (220.7378:-67.2622:1.5);
\fill [black] (32.97,-30.56) -- (33.9,-30.42) -- (33.25,-29.66);
\draw [black] (33.14,-33.49) -- (43.16,-43.11);
\fill [black] (43.16,-43.11) -- (42.93,-42.2) -- (42.23,-42.92);
\draw [black] (31.38,-34.07) -- (30.02,-42.53);
\fill [black] (30.02,-42.53) -- (30.64,-41.82) -- (29.65,-41.66);
\draw [black] (32.49,-33.94) -- (36.21,-42.66);
\fill [black] (36.21,-42.66) -- (36.36,-41.73) -- (35.44,-42.12);
\draw [black] (33.318,-19.014) arc (234:-54:1.5);
\fill [black] (35.08,-19.01) -- (35.96,-18.66) -- (35.15,-18.07);
\draw [black] (33.77,-22.75) -- (32.13,-30.15);
\fill [black] (32.13,-30.15) -- (32.79,-29.47) -- (31.82,-29.26);
\draw [black] (32.98,-22.38) -- (26.72,-30.52);
\fill [black] (26.72,-30.52) -- (27.6,-30.19) -- (26.81,-29.58);
\draw [black] (27.734,-44.819) arc (306.95658:18.95658:1.5);
\fill [black] (28.25,-43.13) -- (28.17,-42.19) -- (27.37,-42.79);
\draw [black] (31.34,-45.65) -- (44.06,-54.55);
\fill [black] (44.06,-54.55) -- (43.69,-53.68) -- (43.12,-54.5);
\draw [black] (17.618,-30.314) arc (234:-54:1.5);
\fill [black] (19.38,-30.31) -- (20.26,-29.96) -- (19.45,-29.37);
\draw [black] (20.01,-33.41) -- (44.19,-54.39);
\fill [black] (44.19,-54.39) -- (43.91,-53.49) -- (43.26,-54.24);
\draw [black] (19.03,-34.03) -- (21.37,-42.57);
\fill [black] (21.37,-42.57) -- (21.64,-41.67) -- (20.68,-41.93);
\draw [black] (48.72,-33.45) -- (25.88,-54.35);
\fill [black] (25.88,-54.35) -- (26.8,-54.18) -- (26.13,-53.44);
\draw [black] (37.1,-4.07) -- (34.9,-9.93);
\fill [black] (34.9,-9.93) -- (35.65,-9.35) -- (34.71,-9);
\draw [black] (36.09,-3.23) -- (23.61,-10.77);
\fill [black] (23.61,-10.77) -- (24.56,-10.78) -- (24.04,-9.92);
\draw [black] (39.31,-3.51) -- (47.39,-10.49);
\fill [black] (47.39,-10.49) -- (47.11,-9.59) -- (46.46,-10.35);
\draw [black] (39.39,-3.41) -- (60.71,-19.59);
\fill [black] (60.71,-19.59) -- (60.37,-18.71) -- (59.77,-19.51);
\draw [black] (37.8,-4.2) -- (37.8,-30.1);
\fill [black] (37.8,-30.1) -- (38.3,-29.3) -- (37.3,-29.3);
\draw [black] (38.2,-4.16) -- (41.2,-18.84);
\fill [black] (41.2,-18.84) -- (41.53,-17.96) -- (40.55,-18.16);
\draw [black] (36.39,-3.61) -- (20.61,-19.39);
\fill [black] (20.61,-19.39) -- (21.53,-19.17) -- (20.83,-18.47);
\draw [black] (63.1,-22.63) -- (66.4,-30.27);
\fill [black] (66.4,-30.27) -- (66.54,-29.33) -- (65.63,-29.73);

\draw [black] (37.8,-32.1) circle (2);
\draw (37.8,-32.1) node {$111$};
\draw [black] (19.2,-20.8) circle (2);
\draw (19.2,-20.8) node {$112$};
\draw [black] (58.6,-44.5) circle (2);
\draw (58.6,-44.5) node {$113$};
\draw [black] (50.2,-32.1) circle (2);
\draw (50.2,-32.1) node {$114$};
\draw [black] (41.6,-20.8) circle (2);
\draw (41.6,-20.8) node {$121$};
\draw [black] (21.9,-11.8) circle (2);
\draw (21.9,-11.8) node {$122$};
\draw [black, fill=white] (48.9,-20.8) circle (2);
\draw (48.9,-20.8) node {$124$};
\draw [black, fill=white] (37,-44.5) circle (2);
\draw (37,-44.5) node {$131$};
\draw [black, fill=white] (11.6,-32.1) circle (2);
\draw (11.6,-32.1) node {$132$};
\draw [black, fill=white] (45.7,-55.7) circle (2);
\draw (45.7,-55.7) node {$133$};
\draw [black, fill=white] (44.6,-44.5) circle (2);
\draw (44.6,-44.5) node {$134$};
\draw [black, fill=white] (44.6,-32.1) circle (2);
\draw (44.6,-32.1) node {$141$};
\draw [black] (10.3,-20.8) circle (2);
\draw (10.3,-20.8) node {$142$};
\draw [black] (66,-44.5) circle (2);
\draw (66,-44.5) node {$143$};
\draw [black] (56.3,-32.1) circle (2);
\draw (56.3,-32.1) node {$144$};
\draw [black, fill=white] (62.3,-20.8) circle (2);
\draw (62.3,-20.8) node {$211$};
\draw [black, fill=white] (34.2,-11.8) circle (2);
\draw (34.2,-11.8) node {$212$};
\draw [black, fill=white] (67.2,-32.1) circle (2);
\draw (67.2,-32.1) node {$213$};
\draw [black, fill=white] (55,-20.8) circle (2);
\draw (55,-20.8) node {$214$};
\draw [black, fill=white] (48.9,-11.8) circle (2);
\draw (48.9,-11.8) node {$221$};
\draw [black, fill=white] (61.4,-32.1) circle (2);
\draw (61.4,-32.1) node {$231$};
\draw [black, fill=white] (69.8,-20.8) circle (2);
\draw (69.8,-20.8) node {$241$};
\draw [black, fill=white] (13.7,-44.5) circle (2);
\draw (13.7,-44.5) node {$311$};
\draw [black, fill=white] (3.6,-32.1) circle (2);
\draw (3.6,-32.1) node {$312$};
\draw [black, fill=white] (36.3,-55.7) circle (2);
\draw (36.3,-55.7) node {$313$};
\draw [black, fill=white] (51.8,-44.5) circle (2);
\draw (51.8,-44.5) node {$314$};
\draw [black, fill=white] (24.4,-55.7) circle (2);
\draw (24.4,-55.7) node {$331$};
\draw [black, fill=white] (5.1,-44.5) circle (2);
\draw (5.1,-44.5) node {$341$};
\draw [black, fill=white] (25.5,-32.1) circle (2);
\draw (25.5,-32.1) node {$411$};
\draw [black, fill=white] (26.6,-20.8) circle (2);
\draw (26.6,-20.8) node {$412$};
\draw [black, fill=white] (21.9,-44.5) circle (2);
\draw (21.9,-44.5) node {$413$};
\draw [black, fill=white] (31.7,-32.1) circle (2);
\draw (31.7,-32.1) node {$414$};
\draw [black, fill=white] (34.2,-20.8) circle (2);
\draw (34.2,-20.8) node {$421$};
\draw [black, fill=white] (29.7,-44.5) circle (2);
\draw (29.7,-44.5) node {$431$};
\draw [black, fill=white] (18.5,-32.1) circle (2);
\draw (18.5,-32.1) node {$441$};
\draw [black, fill=white] (37.8,-2.2) circle (2);
\draw (37.8,-2.2) node {$S$};
\end{tikzpicture}\\
    \caption{The automaton $A_3$.}
    \label{fig:a3}
\end{figure}
\subsection{Depth of States}

I have drawn the automata like this to motivate the following definition.
\begin{definition}
The \textbf{depth} of a state, $s$, in $A_r$, is equal to $r + t_3 - t_2$,  where $t_3$ is the number of 3s that can be found in the rowstates of $s$, and $t_2$ is the number of 2s that can be found in the rowstates of $s$.  Let $d(s)$ denote the depth.
\end{definition}
\begin{lemma}
For a fixed number of rows, $r$, any transition in $A_r$ must either be a self-loop or increase depth. Additionally, a self-loop emerges from a state if and only if the corresponding column has a single 1. 
\end{lemma}
\begin{proof}
Consider an arbitrary transition. The two states in the transition, let's call them $s$ and $s'$, fully specify the contents of two consecutive columns of the matrix, let's call them $m$ and $m+1$ respectively. If a row in $s$ is in rowstate 3, it must be in rowstate 3 in $s'$ as well. Similarly if a row in $s'$ is in rowstate 2, it must be in rowstate 2 in $s$ as well. Therefore the contribution to depth from a single row cannot decrease. The total depth is computed as a sum of contributions from each row, so it follows that the total depth cannot decrease. To show that if the depth remains the same, we must have a self transition, we first note that each individual row's contribution in a depth preserving transition must not increase depth. Otherwise, if such a row did increase its contribution, then we can apply the above reasoning to the other rows, and conclude that the total depth must increase. Therefore all 2's in $s$ must remain 2's, and all 3's in $s'$ must have come from 3's in $s$. 

We know that both columns must contain a 1 somewhere. Say the 1 in column $m$ is in row $i$. It cannot be the case that the only 1 in column $m+1$ is in row $i$, otherwise we would have a self-loop. Therefore there exists a row $j \neq i$ so that column $m+1$ contains a 1 in row $j$. Let's assume $j < i$, and note that the other case will be covered by symmetry. The clockwise pinwheel of index $(j,m)$ indicates that row $j+1$ must end up in rowstate 3. Since the depth is preserved it must have started in rowstate 3 as well. Now consider the clockwise pinwheel of index $(j+1,m)$. Since we cannot have row $j+1$ starting in rowstate 2,  we require row $j+2$ to end up in rowstate 3 as well. This pattern continues all the way down to row $i$ where we start in rowstate 1 and end in rowstate 3, which necessarily increases the depth.

In fact we have shown something stronger: whenever there exists 1s in consecutive columns that are not in the same row, the corresponding transition must increase depth. Thus a self-loop can only emerge from states that contain a single 1. It is straightforward to check that if a state has a single 1, then the corresponding self-loop does not break any rules, and is present.  \qedhere

\end{proof}

\subsection{Counting Baxter Matrices
}
Suppose we fix the number of rows, $r$, and want to count the number of Baxter matrices with $k$ columns. They are in bijection with paths of length $k$ on our state transition graph. If we fix $r$ and ignore the self-loops, the lemma shows that there are only finitely many possible paths. Thus we can classify all Baxter matrices with $r$ rows into finitely many classes according to the paths they take with all the self-loops removed. 

How many $k$ column matrices are there of a particular class? Let $P$ denote a path of length $l$ in the automaton that starts from $S$ and avoids loops, but goes through $q$ states that contain loops. Let $N(P,k)$ be the number of paths of length $k$ arising from $P$.  To compute $N(P,k)$, we just have to choose how many self-loops to put at each node where it is possible to put self-loops, so that the total path length is $k$. This is the number of ways to choose $q$ natural numbers that sum to $k-l$. Using stars and bars we get that if $k\geq l$: $$N(P,k) = \binom{k-l+q-1}{q-1} $$ which is a polynomial in $k$ of degree $q-1$. Note that the polynomial is also correct for $l+1-q \leq k < l$, since it correctly outputs 0 in these cases.

Notice that the minimum possible depth of a state is $0$ at the start state, and the maximum possible depth is $2r-1$. Thus $l \leq  2r-1$, and therefore $q \leq 2r-1$ as well. Thus for $k \geq 2r-1$, $N(P,k)$ coincides with a polynomial in $k$ of degree at most $2r-2$. 

We can also show that the bound is tight, and that there exists a path $P^*$ with corresponding polynomial having degree exactly $2r-2$. To do this it suffices to force $P^*$ to go through exactly $2r-1$ states with self-loops, i.e. set $q = 2r-1$. Let $M^*$ be the $r \times (2r-1)$ Matrix that is all 0s except for the following locations: $M^*[i,i] = 1$ for $1 \leq i \leq r$, and $M^*[i,2r-i] = 1$ for $1 \leq i \leq r$. Thus $M^*$ has two diagonal stripes of 1s that intersect in the middle. $M^*$ has no self-loops since it has no repeated columns, but since each column has a single 1, all of the states that is passes through have legal self-loops. It is straightforward to check that $M^*$ is indeed a Baxter matrix, and then we can set $P^*$ to be the path in $A_r$ that corresponds to $M^*$.

Now for each possible path $P$ without self-loops, $N(P,k)$ is eventually a polynomial in $k$ of degree at most $2r-2$. The total number of matrices with $r$ rows and $k$ columns will be the sum over all these polynomials, which will be a polynomial of degree exactly $2r-2$. The only small caveat is that for small values of $k$ we must take the maximum of each polynomial and 0, so some paths cannot contribute negatively. Summarizing:
\begin{theorem}

Let $a_{r,l,q}$ be the number of paths of length $l$ in $A_r$ that start from $S$, do not use self-loop transitions, but pass by $q$ states carrying a loop. 
Let $P_r(x)$ be the polynomial in $x$ defined by
$$P_r(x) = \sum_{l,q \geq 1} a_{r,l,q} \binom{x-l+q-1}{q-1}$$ Then $P_r(x)$ has degree $2r-2$. 

Let $N_{r,k}$ be the number of Baxter matrices of size $r \times k$. Then, for $k \geq 2r-1$ we have $N_{r,k} = P_r(k)$. 
\end{theorem}
This shows that for a fixed number of rows, $r$, the number of Baxter matrices with $r$ rows and $k$ columns eventually satisfies a polynomial in $k$ of degree $2r-2$. Computing the polynomial for a fixed $r$ is straightforward once the transition graph has been constructed. The author has Maple code that constructs the graph and computes the corresponding polynomial using the above method. The code completes instantly for $r\leq 5$ and within a couple minutes for $r=6$. The polynomial for $r=2$ also appears in Knuth's paper, who found it using a combinatorial approach.

\begin{tabular}{lll}

rows & formula & works for  \\ \hline 
$2$ & $k^2+3k-4$ & $k \geq 2$   \\ 
$3$ & $(1/3)k^4 + 3k^3 - (16/3)k^2 + 2k + 3$& $k \geq 3$\\ 
$4$ &$(1/18)k^6 + (21/20)k^5 - (5/18)k^4 - (151/12)k^3 $&  $k \geq 4$ \\ 
& $+ (443/9)k^2 - (1012/15)k + 28 $&\\
$5$ & $(23/4032)k^8 + (937/5040)k^7 + (853/1440)k^6 -(2671/360)k^5  $& $k \geq 5$ \\
& $+(15697/576)k^4-(341/720 )k^3-(1274363/5040 )k^2$&\\
&$+(98659/140) k-643$&\\
$6$ & $(361/907200)k^{10} + (403/20160)k^9 + (5177/30240)k^8+\ldots$& $k \geq 6$
\end{tabular}

The reader will notice that the polynomials are correct for $k \geq r$, which is a much better bound than $k \geq 2r-1$. Does this pattern continue for larger r? The answer is yes, and we can prove it using Lemma \ref{extra} from the next section. Recall that for any path $P$ in $A_r$ without self-loops, the polynomial corresponding to $P$ is correct for $k \geq l + 1 -q$, where $l$ is the length of $P$, and $q$ is the number of states with self-loops that $P$ passes through. Thus if we can show that for any $P$, $l +1-q \leq r$, we can show that $N_{r,k} = P_r(k)$, for $k \geq r$. To compute the maximum of $l +1-q$, we note that $l-q$ is equal to the number of states without self-loops that $P$ passes through. Using the terminology of the next section, $l-q$ is equal the number of states with extra 1s that $P$ passes through. By Lemma \ref{extra}, $P$ cannot pass through two states with extra 1s on consecutive depth levels. We know $l \leq 2r-1$, and since the first possible depth a state with extra 1s can occur on is depth 2, we get that $l-q \leq r-1$. Thus $l+1-q \leq r$.

\section{Resolving one of Knuth's conjectures}
Donald Knuth conjectured that the number of 1s in any $r \times k$ Baxter matrix is fewer than $(r+k)$. We know from the definition of Baxter matrices that each column must contain at least one 1. Then we can refer to any 1 that is not the topmost in its column as an extra 1.

\begin{theorem}
The number of extra 1s in a Baxter matrix with $r$ rows is less than $r$.
\end{theorem}
To prove this, we will use the following lemma.
\begin{lemma}\label{extra}
The total number of extra 1s that appear in two consecutive columns is at most the change in depth of the corresponding state transition in $A_r$.
\end{lemma}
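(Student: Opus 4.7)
The plan is to reduce the claim to a combinatorial inequality on per-row transition counts and then verify it using pinwheel constraints.

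For each of the $r$ rows, record its rowstate in $s_1$ and $s_2$, and let $N_{xy}$ count the rows going from rowstate $x$ to rowstate $y$. The number of 1's in the column corresponding to $s_1$ equals $|1|_{s_1}=N_{11}+N_{13}+N_{14}$, and similarly $N_{11}+N_{21}+N_{41}$ for $s_2$, so the total number of extra 1's in the two columns is $2N_{11}+N_{13}+N_{14}+N_{21}+N_{41}-2$. Checking each of the nine allowed per-row transitions, the change in a row's contribution to depth is $0$ or $+1$, with $+1$ occurring only for $1\to3$, $2\to1$, and $4\to3$. So the total depth change equals $N_{13}+N_{21}+N_{43}$, and the lemma reduces to
\[
2N_{11}+N_{14}+N_{41}\;\leq\;N_{43}+2.
\]

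To prove this reduced inequality, I would first show that $N_{11}\leq 1$: given two distinct rows $r_1<r_2$ both in $N_{11}$, the clockwise and counter-clockwise pinwheels centred at the boundary between rows $r_1,r_1+1$ and between the two columns have every vertical arm blocked by 1's, so the clockwise pinwheel forces row $r_1+1$ into rowstate $3$ in $s_2$ and the counter-clockwise one forces the same row into rowstate $2$ in $s_1$, which is forbidden since $2\to3$ is a disallowed transition. Hence $2N_{11}\leq 2$ and it remains to show $N_{14}+N_{41}\leq N_{43}+2-2N_{11}$. Both the cases $N_{11}=0$ and $N_{11}=1$ should follow from a propagation argument in the spirit of the proof of Lemma~2.1: for each pair of rows whose 1's form a ``crossing'' in the two columns, one of the clockwise/counter-clockwise pinwheels between them forces a chain of rowstate assignments on intermediate rows (marking them rowstate $2$ in $s_1$ or rowstate $3$ in $s_2$), and the chain must terminate at a depth-increasing $4\to3$ transition, since the alternative terminations ($1\to3$ or $2\to1$) require a 1 that a non-active intermediate row does not possess.

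The main obstacle will be making the bookkeeping precise when several crossings share intermediate rows. A natural approach is to sort the ``active'' rows (those contributing to the LHS) from top to bottom and, for each active row except the two extremal ones, to exhibit a unique $4\to3$ row between it and its nearest active neighbour. The slack $+2$ on the right-hand side corresponds exactly to the two extremal active rows, which need no ``payment'' because no pinwheel extends past them; when $N_{11}=1$, the single $N_{11}$ row counts twice in the LHS and consumes this slack, which is why the bound sharpens to $\leq N_{43}$ in that case. A careful case split on which of $N_{11}$, $N_{14}$, $N_{41}$ each active row lies in, and on the relative positions of their 1's, then completes the argument.
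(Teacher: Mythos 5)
Your reduction is correct and is a genuinely nice reformulation: the nine legal per-row transitions, the identification of $1\to3$, $2\to1$, $4\to3$ as the only depth-increasing ones, and the resulting equivalence of the lemma with $2N_{11}+N_{14}+N_{41}\leq N_{43}+2$ all check out (the $N_{13}$ and $N_{21}$ terms cancel exactly because those rows ``pay for'' their own 1's). Your proof that $N_{11}\leq 1$ is also correct and clean: with 1's at $(r_1,c)$, $(r_1,c+1)$, $(r_2,c)$, $(r_2,c+1)$, all the relevant arms of both pinwheels centered between rows $r_1$ and $r_1+1$ are indeed blocked, forcing row $r_1+1$ into the forbidden $2\to3$ transition. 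This is organized differently from the paper, which works directly with the extremal rows $i,j,k,l$ containing 1's in the two columns and splits into three cases on their relative order.

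However, the remaining inequality $N_{14}+N_{41}\leq N_{43}+2-2N_{11}$ is where essentially all of the content of the lemma lives, and you have only sketched it (``should follow from a propagation argument,'' ``a careful case split $\ldots$ then completes the argument''). This is a genuine gap, not a routine verification. Your proposed bookkeeping --- a unique $4\to3$ row between each non-extremal active row and its nearest active neighbour --- is not established and cannot stand as written: in the Baxter matrix $\left(\begin{smallmatrix}1&0\\0&1\\1&0\end{smallmatrix}\right)$ the two active rows (both $1\to4$) have only a $2\to1$ row between them and no $4\to3$ row at all, and the inequality survives only because both active rows are extremal. So you must specify exactly which gaps are guaranteed to contain $4\to3$ rows and why; the answer depends on the types ($1\to4$ versus $4\to1$) of the neighbouring active rows and on whether the exceptional $N_{11}$ row, which consumes the entire slack of $2$, is present. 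The paper resolves precisely this difficulty with its three-case analysis, propagating forced rowstate-2 and rowstate-3 assignments along chains of pinwheels inward from the extremal 1's of the two columns; to complete your argument you would need to carry out an equivalent propagation and assignment in full, not merely assert that one exists.
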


For now let's assume the lemma is true. Suppose $M$ is some Baxter matrix with $r$ rows and $k$ columns. Let $p$ be its corresponding path in $A_r$, and let $T$ be the set of transitions in $p$. Let $t^*$ be the final state in $p$. If we use the fact that the start state does not contain any extra 1s and assume that $t^*$ does not contain any extra 1s, we get that
\begin{align}
     (\text{\# of extra 1s in } M) & = \frac{1}{2}\left(\sum_{\tau \in T}(\text{\# of extra 1s in the columns associated with } \tau) \right)
\end{align}

Suppose $t^*$ does contain extra 1s. Let $t'$ be the state obtained by replacing all the extra 1s in $t^*$ with rowstate 3. We now modify $p$ to contain an extra transition from $t^*$ to $t'$, regardless of whether this transition exists in $A_r$. Note that $t'$ does not have any extra 1s by construction. This extra transition satisfies Lemma \ref{extra}, because for each extra 1 in $t^*$, the depth has increased by 1 in that row. Now we can apply Lemma \ref{extra} to the (potentially modified) $p$.

\begin{align}
    (\text{\# of extra 1s in } M) & = \frac{1}{2}\left(\sum_{\tau \in T}(\text{\# of extra 1s in the columns associated with } \tau) \right) \\
    & \leq \frac{1}{2}\left(\sum_{\tau \in T}(\text{depth increase of } \tau) \right) \\
    & \leq \frac{1}{2}(2r-1) \\
    & < r
\end{align}
Now for a proof of Lemma \ref{extra}:
\begin{proof}
Consider 2 consecutive columns, column $m$ and column $m+1$. We will denote them by $c_0$ and $c_1$ for short.    Let $i$ be the minimum row which contains a 1 in $c_0$, $j$ be the minimum row that contains a 1 in $c_1$,  $k$ be the maximum row which contains a 1 in $c_0$, and $l$ be the maximum row that contains a 1 in $c_1$. We can assume $i \leq j$, for if not the proof is symmetric. We will consider 3 cases. 

\textbf{Case 1:} $i\leq k \leq j \leq l$.  We will show that each extra 1 in $c_0$ forces the depth to increase by at least one. If $i=k$ then there are no extra 1s so there is nothing to show. So we have $i < k \leq j$. Let's look at the counterclockwise pinwheel of index $(k-1,m)$. The only way to satisfy it is if row $k-1$ ends up in rowstate 3. We can then look at the counterclockwise pinwheel of index $(k-2,m)$, and conclude that row $k-2$ must end up in rowstate 3 as well (the left segment cannot be used otherwise that row would be all 0s). We get that each row from $k-1$ up to $i$ ends up in rowstate 3, so we have an increase in depth for each 1 in $c_0$ from rows $i$ to $k-1$, which is an increase in depth by 1 for each extra 1 in $c_0$. Now look at the counterclockwise pinwheel of index $(j,m)$, and recall that $j \geq k$. It forces row $j+1$ to start in rowstate 2. Applying the same process, all rows from row $j+1$ to $l$ must start in rowstate 2. This gives an increase in depth for each 1 in $c_1$ between row $j+1$ and $l$.

If we are not in case 1, then it must be the case that $k > j$.

\textbf{Case 2:} $i \leq j = l < k$. We will show that for each 1 that is not on row $j$, there must be an increase in depth. If there is a 1 above row $j$, then the counterclockwise pinwheel of index $(j-1,m)$ can only be satisfied if row $j-1$ ends up in rowstate 3. Continuing to look at the counterclockwise pinwheels upward, all rows must end up in rowstate 3 from row $j-1$ up to row $i$. Thus all 1s in those rows cause there to be transitions from rowstate 1 into rowstate 3, causing an increase in depth. Similarly the clockwise pinwheel of index $(j,m)$ requires row $j+1$ to end up in rowstate 3, and all rows below it down to row $k$ as well. Thus each 1 below row $j$ also causes an increase in depth.

\textbf{Case 3:} $i \leq j < l$ and $k> j$. It is not hard to show that in this case $i < j$ and there are no 1s in the first column on rows $j$ through $l$. Let $i'$ be the maximum row that contains a 1 that is less than $j$, and $k'$ be the minimum row that contains a 1 that is greater than $l$. We will show that each 1 except for the 1s in rows $i'$ and $k'$ cause an increase in depth. By the same reasoning that we applied in case 1, there must be an increase in depth for each 1 above row $i'$ and for each 1 below row $k'$. If there are at least two 1s in the $c_1$, then each of those 1s must have transitioned from rowstate 2 in $c_0$, causing an increase in depth. If there is a single 1, then we can look at the clockwise pinwheel below it to conclude it either transitioned from rowstate 2 or the next row ends up in rowstate 3. In the first case we are done and in the second case we can progress the rowstate 3's down until we get that row $k'$ must end up in rowstate 3. \qedhere

\end{proof}
Note that the number of extra 1s in a Baxter matrix is completely determined by its path in $A_r$. Also Lemma \ref{extra} shows that a self-loop never can produce extra 1s. Therefore the self-loops in a path have no effect on the number of extra 1s. If we want to compute the number of $r \times k$ Baxter matrices with $t$ 1s for some $k \leq t < k+r$, we can use the same method that we used to count them before, except only add the polynomials for paths without self-loops that add the appropriate amount of extra 1s. Here are some results:

$r=3$, correct for $k \geq 3$

\begin{tabular}{lll}
extra 1s & total weight & formula  \\ \hline 
0 & $k$ & $(1/3)k^4 - k^3 + (2/3)k^2$ \\ 
1 & $k+1$& $4k^3 - 12k^2 + 15k - 8$\\ 
2 &$k+2$&$6k^2 - 13k + 11$
\end{tabular}
\vskip .5cm

$r=4$, correct for $k \geq 4$

\begin{tabular}{lll}

1s & weight & formula  \\ \hline 
0 & $k$ & $(1/18)k^6 - (3/10)k^5 + (2/9)k^4 + (3/2)k^3 - (77/18)k^2 + (24/5)k - 2$ \\ 
1 & $k+1$& $(27/20)k^5 - (47/6)k^4 + (235/12)k^3 - (157/6)k^2 + (226/15)k$\\ 
2 &$k+2$&$(22/3)k^4 - (121/3)k^3 + (335/3)k^2 - (500/3)k + 106$\\
3 &$k+3$&$(20/3)k^3 - 32k^2 + (238/3)k - 76$
\end{tabular}
\vskip .5cm
There is a github repository for the maple code, and the code is open source under the MIT license. 
\url{https://github.com/DarthCalculus/BaxterMatrices}


\end{document}